\DeclareMathOperator{\Ind}{Ind}
\DeclareMathOperator{\Ric}{Ric}
\DeclareMathOperator{\Sym}{Sym}
\def\sideremark#1{\ifvmode\leavevmode\fi\vadjust{\vbox to0pt{\vss
 \hbox to 0pt{\hskip\hsize\hskip1em
 \vbox{\hsize3cm\tiny\raggedright\pretolerance10000
 \noindent #1\hfill}\hss}\vbox to8pt{\vfil}\vss}}}
\newcommand{\comment}[1]{}
\newtheorem{thm}{Theorem}[section]
\newtheorem{lem}[thm]{Lemma}
\newtheorem{cor}[thm]{Corollary}
\theoremstyle{definition}
\newtheorem{defn}[thm]{Definition}
\theoremstyle{remark}
\begin{document}

\title[Nonuniqueness for $\sigma_4$ and $H_4$]{Nonuniqueness for a fully nonlinear, degenerate elliptic boundary value problem in conformal geometry}
\author{Zhengyang Shan}

%\address{109 McAllister Building \\ Penn State University \\ University Park, PA 16802}
\email{szy199749@gmail.com}
\keywords{fully nonlinear PDE; boundary value problem; bifurcation theory}
%\subjclass[2010]{Primary 58J32; Secondary 53A30, 58J40}
\begin{abstract}
 We study the problem of conformally deforming a manifold with boundary to have vanishing $\sigma_4$-curvature in the interior and constant $H_4$-curvature on the boundary. We prove that there are geometrically distinct solutions using bifurcation results proven by Case, Moreira and Wang. Surprisingly, our construction via products of a sphere and hyperbolic space only works for a finite set of dimensions.
\end{abstract}
\maketitle

\renewcommand{\theequation}{\arabic{section}.\arabic{equation}}

\section{Introduction}
\label{sec:intro}

In this paper we use bifurcation theory to give a nonuniqueness result for a fully nonlinear, degenerate elliptic boundary-value problem involving the $\sigma_k$-curvature. Our result gives the first explicit examples of nonuniqueness for $k=4$, and relies on the general bifurcation theorem proven by Case, Moreira and Wang~\cite{Case}. We refer to the introduction of the article~\cite{Case} for a thorough account of the history of this problem in the context of nonuniqueness results for Yamabe-type problems.

Recall that the $\sigma_4$-curvature of a Riemannian manifold is defined by $\sigma_{4}=\sigma_{4}(g^{-1}P)$, where $P$ is the Schouten tensor. S. Chen~\cite{12} introduced the invariant $H_{4}$ on the boundary so that the pair $(\sigma_{4}; H_{4})$ is variational. See ~\cite{Case} for more details. 

We are interested in the set of elliptic solutions of the boundary-value problem 
\label{equ:main}
\begin{equation}
	\begin{cases}
		\sigma_{4}^{g}=0 , & \text{ in } X, \\ H_{4}^g = 1, & \text{ on } M
	\end{cases}
\end{equation}
in a given conformal class $[g_0]$ on $X$ and $g$ locally conformally flat. A solution is elliptic if it lies in the $C^{1,1}$-closure of
\[\Gamma_{4}^{+} = \lbrace g \in [g_0] \ | \ \sigma_{1}^{g} > 0, \dotsb , \sigma_{4}^g > 0 \rbrace. \]
Written in terms of a fixed background metric, Equation (1.1) is a fully nonlinear degenerate elliptic PDE with fully nonlinear Robin-type boundary condition.

The Case, Moreira and Wang~\cite{Case} bifurcation theorem involves the following Dirichlet problem. Suppose $T_{3} \coloneqq \frac{\partial \sigma_4}{\partial A_{i,j}}$ is positive definite. Then standard elliptic theory~\cite{20} implies that there exists a unique solution to
\label{equ2.5}
\begin{equation}
	\begin{cases}
		\delta(T_{3}(\nabla \upsilon))=0, & \text{ in } X, \\ \upsilon = \phi, & \text{ for all } \phi \in C^{\infty}(M). 
	\end{cases}
\end{equation}
This is the bifurcation theorem which gives sufficient conditions to conclude that a family of solutions to (1.1) has a bifurcation instant. 

\begin{thm}[{\cite{Case}}]
 \label{thm:main_thm}
Fix $4 \le j \in \mathbb{N}$ and $\alpha \in (0,1)$. Let $X^{n+1}$ be a compact manifold with boundary $M^{n} \coloneqq \partial X.$ Let $\lbrace g_{s} \rbrace_{s \in [a,b]}$ be a smooth one-parameter family of $C^\infty$-metrics on $X$ such that $\sigma_{4}^{g_s}=0$ and with respect to which $M$ has unit volume and constant $H_4$-curvature for all $s \in [a,b]$. We assume additionally that $g_s$ is locally conformally flat for all $s \in [a,b].$ Suppose that:\\
\indent (1) for every $s \in [a,b],$ the metric $g_{s} \in \overline{\Gamma^{+}_{4}}$ and there is a metric $\hat{g}_s \in \Gamma^{+}_{4}$  conformal to $g_{s}$ and such that $g_{s|TM} = \hat{g}_{s|TM};$ \\
\indent (2) for every $s \in [a,b], T_{3}^{g_s} > 0 \text{ and } S_{3}^{g_s}>0;$ \\
\indent (3) the Jacobi operators $\mathcal{DF}^{g_a} \text{ and } \mathcal{DF}^{g_b}$ are nondegenerate; and\\
\indent (4) $\Ind (\mathcal{DF}^{g_a}) \ne \Ind(\mathcal{DF}^{g_b}).$ \\
\noindent Then there exists an instant $s_* \in (a,b)$ and a sequence $(s_\ell)_\ell \subset [a,b]$ such that $s_\ell \rightarrow s_* \text{ as } \ell \rightarrow \infty$ and for each $\ell$, there are nonisometric unit volume $C^{j,\alpha}$-metrics in $[g_{s_\ell}|_{TM}]$ with constant $\mathcal{H}_4$-curvature.
\end{thm}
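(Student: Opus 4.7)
The strategy is to interpret (1.1) as the Euler--Lagrange system of a constrained variational problem on the conformal class $[g_0]$ and then apply an abstract variational bifurcation principle to detect a jump in Morse index. First, I would fix a background metric and write $g = e^{2u}g_0$. Under the local conformal flatness hypothesis the pair $(\sigma_4, H_4)$ arises from a smooth functional $\mE$ on conformal factors, so that after introducing a Lagrange multiplier for the unit boundary-volume constraint, the system (1.1) is equivalent to $\nabla \mE(u) = 0$. On the Banach manifold $\mM^{j,\alpha}$ of unit-boundary-volume $C^{j,\alpha}$ conformal factors, this produces a smooth map $\mF \colon [a,b] \times \mM^{j,\alpha} \to \mY$ whose zeros are the desired elliptic solutions and whose given branch $\mF(s, 0) = 0$ encodes the family $\{g_s\}$. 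Hypothesis (1) lets one carry out this local parametrization near each $g_s$ by passing to the nondegenerate representative $\hat{g}_s$ in the conformal class.

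Next, I would establish that each linearization $\mathcal{DF}^{g_s}$ is a self-adjoint elliptic boundary operator of Fredholm index zero with a well-defined finite Morse index. Self-adjointness is built into the variational structure. For ellipticity, the positivity of $T_3^{g_s}$ in hypothesis (2) makes the interior linearization a uniformly elliptic divergence-type operator, exactly as witnessed by the solvability of the Dirichlet problem (1.2); the positivity of $S_3^{g_s}$ supplies the complementary Lopatinskii--Shapiro condition for the Robin-type boundary. One can then use (1.2) to harmonically extend boundary data into the interior and reduce $\mathcal{DF}^{g_s}$ to a self-adjoint elliptic pseudodifferential operator on the closed manifold $M$, whose spectrum is discrete with only finitely many negative eigenvalues, giving a finite Morse index $\Ind(\mathcal{DF}^{g_s})$ depending lower-semicontinuously on $s$.

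Third, assumption (3) forces $\ker \mathcal{DF}^{g_a} = \ker \mathcal{DF}^{g_b} = 0$, while assumption (4) gives different Morse indices at the endpoints. Since the Morse index of a continuous family of self-adjoint Fredholm operators can change only at parameters where the kernel is nontrivial, it must jump at some $s_* \in (a,b)$, forcing $\ker \mathcal{DF}^{g_{s_*}} \ne 0$. The variational bifurcation principle for families of Fredholm gradient maps (e.g.\ Smoller--Wasserman or Rabinowitz-type results) upgrades this necessary condition to a sufficient one: a jump of Morse index across a nondegenerate parameter produces genuine bifurcation, yielding a sequence $s_\ell \to s_*$ and, for each $\ell$, a nontrivial critical point $u_\ell$ of $\mE(\,\cdot\,; s_\ell)$ not equivalent to $u \equiv 0$. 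Passing to the induced boundary conformal class produces the claimed nonisometric unit volume $C^{j,\alpha}$-metrics in $[g_{s_\ell}|_{TM}]$ with constant $\mathcal{H}_4$-curvature.

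The main obstacle is the analytic setup rather than the bifurcation step itself. One must check that the pair $(\sigma_4, H_4)$ satisfies the full Lopatinskii--Shapiro ellipticity precisely when $T_3, S_3 > 0$, that $\mF$ is smooth between the chosen Banach manifolds and Fredholm of index zero (which requires careful gauge/volume fixing to quotient out the trivial scaling direction and the diffeomorphism invariance), and that the Dirichlet reduction genuinely gives a self-adjoint elliptic operator on $M$ whose Morse index agrees with that of the ambient Jacobi operator. Once these functional-analytic preliminaries are in place, the conclusion reduces to a standard application of variational bifurcation for Fredholm gradients.
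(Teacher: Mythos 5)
Theorem~\ref{thm:main_thm} is not proven in this paper; it is quoted with attribution to Case, Moreira and Wang \cite{Case}, and the surrounding text in Section~\ref{sec:intro} only records the definitions of $\mathcal{F}$, of the Jacobi operator $\mathcal{DF}$ obtained by restricting the linearization of $\mathcal{F}$ to solutions of the Dirichlet problem~(1.2), and of the Morse index. Your outline---cast~(1.1) as a constrained variational problem, use $T_3>0$ and $S_3>0$ to reduce $\mathcal{DF}$ through~(1.2) to a self-adjoint elliptic operator on $M$ with finite Morse index, and then invoke a Morse-index-jump bifurcation theorem for families of gradient Fredholm maps---is consistent with that setup and is a faithful reconstruction of the strategy of \cite{Case}, so there is no contradiction with anything in this paper; there is simply no proof here to compare against in detail.
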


The function $\mathcal{F}$ is defined by
\[ \mathcal{F}(u) = \left (\sigma_{4}^{g_u}, H_{4}^{g_u} - \frac{1}{\text{vol}_{g_{u}}(M)} \oint_M H_{4}^{g_u} \text{dvol}_h \right ), \] 
where $g_u = e^{2u}g$. The Jacobi operator $\mathcal{DF}: C^{\infty}(M)\rightarrow C^{\infty}(M)$ is defined by restricting the linearization of $\mathcal{F}$ to solutions of (1.2). In particular, if $\mathcal{F}(1)=0$, then $\mathcal{DF}: C^{\infty}(M)\rightarrow C^{\infty}(M)$ is given by
\[ \mathcal{DF}(\phi) = T_{3}(\eta, \nabla \phi)- \overline{\delta}(S_{3}(\overline{\nabla} \phi))-7H_{4} \phi. \]

The Jacobi operator $\mathcal{DF}$ is \emph{nondegenerate} if 0 is not an eigenvalue of $\mathcal{DF}: C^{\infty}(M)\rightarrow C^{\infty}(M)$. The $\emph{index}$ Ind($\mathcal{DF}^g$) of $\emph{the Jacobi operator}$ is the number of negative eigenvalues of $\mathcal{DF}^g: C^{\infty}(M) \rightarrow C^{\infty}(M)$. 

The instant $s_*$ in Theorem~\ref{thm:main_thm} is in fact a bifurcation instant. 

\begin{defn}
\label{bifurcation}
Let $X^{n+1}$ be a compact manifold with nonempty boundary $M^{n} \coloneqq \partial X.$ Fix $j \ge 4$ and a parameter $\alpha \in (0,1).$ Let ${\lbrace g_s \rbrace}_{s \in [a,b]}$ be a smooth one-parameter family of $C^{j,\alpha}$-matrics on $X$ such that $\sigma_{4}^{g_s} = 0$ and with respect to which $M$ has unit volume and constant $H_4$-curvature. A $\emph{bifurcation instant}$ $\emph{for the family}$  ${\lbrace g_s \rbrace}$ is an instant $s_{*} \in (a,b)$ such that there exist sequences $(s_\ell)_\ell \subset [a,b]$ and $(w_\ell)_\ell \subset C^{j,\alpha}$ such that \\
\indent (1) $\sigma_{4}^{g_\ell} = 0$ and $H_{4}^{g_\ell}$ is constant, where $g_{\ell} \coloneqq e^{2w_{\ell}}g_{s_\ell}$, \\
\indent (2) $w_{\ell} \ne 0$ for all $\ell \in \mathbb{N}$, \\
\indent (3) $s_{\ell} \rightarrow s_{*}$ as $\ell \rightarrow \infty$, \\
\indent (4) $w_{\ell} \rightarrow 0$ in $C^{j,\alpha}$ as $\ell \rightarrow \infty$. \\
In particular, if $s_*$ is a bifurcation instant for a family ${\lbrace g_s \rbrace}$ of metrics as in Definition~\ref{bifurcation}, then for each $\ell \in \mathbb{N}$, there are nonhomothetic metrics in each conformal class $[g_{s_\ell}]$ which lie in $\overline{\Gamma^{+}_{4}}$, have $\sigma_4 = 0$, and have $H_4$ constant.
\end{defn}

Our first result is a nonuniqueness theorem on products of a spherical cap and a hyperbolic manifold. 

\begin{thm}
\label{thm:thm1.2}
Let $(S_{\varepsilon}^{806},d\theta^2), \ \varepsilon \in (0, \pi/2)$, be a spherical cap, let $(H^{715}, g_H)$ be a compact hyperbolic manifold, and denote by $(X_\varepsilon, g)$ their Riemannian product.\
Then, up to scaling, $({X_{\varepsilon}},g)$ is a solution of (1.1) for all $\varepsilon \in (0,\pi/2).$ Moreover, up to scaling, there is a sequence $(\varepsilon_j)_j \subset (0,\pi/2)$ of bifurcation instants for (1.1) for which $\varepsilon_{j} \rightarrow 0 \text{ as } j \rightarrow \infty.$
\end{thm}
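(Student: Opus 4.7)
The plan is to normalize the family $\{g_\varepsilon\}_{\varepsilon \in (0,\pi/2)}$ on $X_\varepsilon = S^{806}_\varepsilon \times H^{715}$ so that it satisfies the standing setup of Theorem~\ref{thm:main_thm}, and then to produce a sequence of bifurcation intervals accumulating at $\varepsilon = 0$ by analyzing the spectrum of the Jacobi operator through separation of variables on the product boundary.

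First I would fix $(S^{806}, d\theta^2)$ to have sectional curvature $+1$ and scale $g_H$ to have sectional curvature $-1$. A Kulkarni--Nomizu computation shows that the Weyl tensor of a product of two constant-curvature factors $M^p(\kappa_1) \times N^q(\kappa_2)$ vanishes precisely when $\kappa_1 + \kappa_2 = 0$, so this is the unique normalization making $g_\varepsilon$ locally conformally flat; the Schouten tensor $g^{-1}P$ is then parallel with eigenvalues $+\tfrac12$ of multiplicity $806$ and $-\tfrac12$ of multiplicity $715$. Extracting the coefficient of $x^4$ from
\[
(1+x)^{806}(1-x)^{715} = (1-x)^{91}(1-x^2)^{715}
\]
yields
\[
16\,\sigma_4(g^{-1}P) = \binom{91}{4} - 715\binom{91}{2} + \binom{715}{2} = 2{,}672{,}670 - 2{,}927{,}925 + 255{,}255 = 0,
\]
so $\sigma_4^{g_\varepsilon} \equiv 0$. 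The combinatorial cancellation forced by the pair $(806,715)$ is the source of the finite-set-of-dimensions phenomenon noted in the abstract.

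For hypotheses (1)--(2) of Theorem~\ref{thm:main_thm}, the boundary $\partial X_\varepsilon = S^{805}_\varepsilon \times H^{715}$ is totally umbilical in the sphere factor with principal curvature $\cot\varepsilon$ and totally geodesic in the hyperbolic factor, so its second fundamental form is parallel and $H_4^{g_\varepsilon}$ is an explicit polynomial in $\cot\varepsilon$ and $\pm\tfrac12$, hence constant on $\partial X_\varepsilon$. A global rescaling normalizes $H_4^{g_\varepsilon} = 1$ and $\vol_{g_\varepsilon}(\partial X_\varepsilon) = 1$ without disturbing $\sigma_4 = 0$. Direct computation shows $\sigma_1, \sigma_2, \sigma_3 > 0$, placing $g_\varepsilon \in \overline{\Gamma^+_4}$, and an interior conformal bump produces $\hat g_\varepsilon \in \Gamma^+_4$ with the same induced boundary metric. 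Positivity of the Newton tensors $T_3^{g_\varepsilon}$ and $S_3^{g_\varepsilon}$ reduces to the analogous elementary-symmetric computation on a diagonal matrix with one eigenvalue removed.

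The crux, and the main obstacle, is verifying (3)--(4) on intervals accumulating at $0$. Because $g_\varepsilon$ is a product with parallel $T_3, S_3$, the Dirichlet problem~(1.2) separates: for a joint eigenfunction $\phi = \phi_S \otimes \phi_H$ with $\Delta_{S^{805}}\phi_S = -\lambda_S^k \phi_S$ (eigenvalues $\lambda_S^k = k(k+804)$) and $\Delta_{H^{715}}\phi_H = -\lambda_H^\ell \phi_H$, the extension has the form $\upsilon = f_{k,\ell}(\theta)\,\phi_S \otimes \phi_H$, where $f_{k,\ell}$ solves the regular-singular ODE
\[
f'' + 805\cot\theta\, f' - \Bigl(\tfrac{\lambda_S^k}{\sin^2\theta} + \tfrac{t_H}{t_S}\lambda_H^\ell\Bigr)f = 0 \text{ on } (0,\varepsilon), \quad f_{k,\ell}(\varepsilon) = 1,
\]
regular at $\theta = 0$, with $t_S, t_H$ the two eigenvalues of $T_3$. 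Substitution into the formula for $\mathcal{DF}$ reduces the Jacobi operator on each joint eigenspace to multiplication by
\[
\Lambda_{k,\ell}(\varepsilon) = t_S\,\frac{f'_{k,\ell}(\varepsilon)}{f_{k,\ell}(\varepsilon)} + s_S\,\frac{\lambda_S^k}{\sin^2\varepsilon} + s_H\,\lambda_H^\ell - 7H_4,
\]
so the spectrum of $\mathcal{DF}^{g_\varepsilon}$ is the countable set $\{\Lambda_{k,\ell}(\varepsilon)\}_{k,\ell \ge 0}$. A Sturm-oscillation analysis of this one-parameter family of ODEs, exploiting the infinite discrete Laplace spectrum $\lambda_H^\ell \to \infty$ of the compact hyperbolic manifold $H^{715}$, should produce a sequence of parameters $\varepsilon_j^\flat \to 0$ at each of which some $\Lambda_{k_j,\ell_j}$ has an isolated simple zero. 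Choosing small disjoint intervals $[a_j, b_j]$ around these crossings so that the endpoints are nondegenerate yields $\Ind(\mathcal{DF}^{g_{a_j}}) \ne \Ind(\mathcal{DF}^{g_{b_j}})$, and Theorem~\ref{thm:main_thm} produces bifurcation instants $\varepsilon_j^* \in (a_j, b_j)$ with $\varepsilon_j^* \to 0$.
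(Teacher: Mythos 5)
Your computation of $\sigma_4=0$ via generating functions is a clean alternative to the paper's direct elementary-symmetric identity, though there is a sign typo: since $(1+x)^{806}(1-x)^{715} = (1+x)^{91}\bigl[(1+x)(1-x)\bigr]^{715}$, the factor should be $(1+x)^{91}(1-x^2)^{715}$, not $(1-x)^{91}(1-x^2)^{715}$; the resulting coefficient of $x^4$ and the cancellation $\binom{91}{4}-715\binom{91}{2}+\binom{715}{2}=0$ are unchanged. The normalization $\kappa_1=-\kappa_2$ for local conformal flatness and the umbilicity/total-geodesy statements for the boundary are also correct.

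The gap is in your treatment of hypotheses (3)--(4). Your separated-variable reduction of the Jacobi operator to the family $\{\Lambda_{k,\ell}(\varepsilon)\}$ is consistent with the product geometry, but the decisive step --- ``A Sturm-oscillation analysis \dots should produce a sequence of parameters'' --- is neither carried out nor obviously tractable: you would need analyticity and transversality of every relevant $\Lambda_{k,\ell}(\varepsilon)$, as well as control of the kernel on the orthocomplement of constants, over the entire two-parameter lattice. The paper avoids all of this. It invokes Corollary~\ref{cor5.7} (the rotationally-symmetric form of Theorem~\ref{thm:main_thm}) and works only with the $k=0$ modes, i.e.\ the invariant subspace $\pi^*C^\infty(H^{715})$, on which $\mathcal{DF}$ restricts because of the $SO(806)$ symmetry. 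For such $\phi$ the radial-extension term is $T_3(\eta,\nabla v_\phi)=\mathcal{O}(1)$, so
\[
\mathcal{DF}^{g_\varepsilon}(\pi^*\phi)=\pi^*\bigl[-\delta_{g_H}\bigl((\iota^*S_3)(\overline\nabla\phi)\bigr)-7(\iota^*H_4)\phi\bigr]+\mathcal{O}(1),
\]
and Lemma~\ref{lem:lemma1} gives the growth rates $\iota^*S_3\sim c_1\varepsilon^{-5}$, $H_4\sim c_2\varepsilon^{-7}$. On a Laplace eigenfunction with eigenvalue $\lambda_\ell$ the leading contribution is $c_1\varepsilon^{-5}\lambda_\ell-7c_2\varepsilon^{-7}$, which is negative whenever $\lambda_\ell\lesssim\varepsilon^{-2}$; since $H^{715}$ is compact, the number of such $\lambda_\ell$ is finite for each $\varepsilon$ but tends to infinity as $\varepsilon\to 0$. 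Hence $\Ind(\mathcal{DF}^{g_\varepsilon})\to\infty$, the index must change infinitely often, and Corollary~\ref{cor5.7} converts each index jump across a pair of nondegenerate endpoints into a bifurcation instant. You should replace your Sturm-oscillation plan by this argument: restrict to $k=0$ in your own notation, read off the $\varepsilon^{-7}$ versus $\varepsilon^{-5}$ competition from the asymptotics of $H_4$ and $\iota^*S_3$, and conclude via the blow-up of the index rather than by chasing individual eigenvalue crossings.
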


Our second result is a nonuniqueness theorem on products of a round sphere and a small geodesic ball in hyperbolic space. 

\begin{thm}
\label{thm:thm1.3}
Let $(S^{806},d\theta^2)$ be a round sphere and let $(H_{\varepsilon}^{715}, g_H)$, $\varepsilon \in \mathbb{R_+}$, be a geodesic ball in hyperbolic space. Denote by $(X_{\varepsilon},g)$ their Riemannian product. \
Then, up to scaling, $({X_{\varepsilon}},g) \text{ is a solution of (1.1) for all } \varepsilon \in \mathbb{R_+}.$ Moreover, up to scaling, there is a sequence $(\varepsilon_{j})_{j} \subset \mathbb{R_+}$ of bifurcation instants for (1.1) for which $\varepsilon_{j} \rightarrow 0 \text{ as } j \rightarrow \infty. $
\end{thm}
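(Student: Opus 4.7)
The plan is to verify the four hypotheses of Theorem~\ref{thm:main_thm} for the family $\{g_\varepsilon\}$ on a sequence of shrinking sub-intervals of $\mathbb{R}_+$, and then extract a sequence of bifurcation instants accumulating at $\varepsilon = 0$. After normalising so that $S^{806}$ has sectional curvature $+1$ and hyperbolic space has sectional curvature $-1$ (which forces the product to be locally conformally flat), the Schouten tensor of $g_\varepsilon$ is diagonal with eigenvalue $+\tfrac{1}{2}$ of multiplicity $806$ along the sphere and $-\tfrac{1}{2}$ of multiplicity $715$ along the hyperbolic directions. Consequently $\sigma_4^{g_\varepsilon}$ is a fixed multiple of the coefficient of $x^4$ in $(1+x)^{806}(1-x)^{715}$, and the dimensions $(806,715)$ are chosen precisely so that this coefficient vanishes -- this arithmetic coincidence is the source of the ``finite set of dimensions'' comment in the abstract. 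Since the boundary $M_\varepsilon = S^{806} \times \partial H^{715}_\varepsilon$ is a Riemannian product of two round spheres (the second of radius $\sinh(r_\varepsilon)$, where $r_\varepsilon$ is the hyperbolic radius of the ball), it is homogeneous, and every natural scalar invariant on it -- in particular $H_4^{g_\varepsilon}$ -- is automatically constant.

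I would next verify conditions (1) and (2). Positivity of the Newton-type tensors $T_3^{g_\varepsilon}$ and $S_3^{g_\varepsilon}$ reduces to a closed-form inequality in the Schouten eigenvalues $\pm \tfrac{1}{2}$ and the principal curvatures $\coth(r_\varepsilon) > 0$ of the geodesic sphere, which can be checked directly via the Newton identities. To produce $\hat g_\varepsilon \in \Gamma_4^+$ agreeing with $g_\varepsilon$ on $TM_\varepsilon$, I would take a small conformal deformation $\hat g_\varepsilon = e^{2u_\varepsilon} g_\varepsilon$ with $u_\varepsilon$ depending only on the signed distance from $M_\varepsilon$ and vanishing on $M_\varepsilon$; the only condition to arrange is $\sigma_4 > 0$ in the interior, which is possible because the first three $\sigma_k$ are already strictly positive at $g_\varepsilon$.

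The substantive content is conditions (3) and (4). Both the Dirichlet problem (1.2) and the operator $\mathcal{DF}^{g_\varepsilon}$ commute with the $SO(807)\times SO(715)$-action, so eigenfunctions of $\mathcal{DF}^{g_\varepsilon}$ may be taken of the form $\phi = Y_k(\theta) Z_m(\omega)$, where $Y_k$ is a spherical harmonic on $S^{806}$ with eigenvalue $k(k+805)$ and $Z_m$ is a spherical harmonic on $\partial H^{715}_\varepsilon \cong S^{714}$ with eigenvalue $m(m+713)$. The associated Dirichlet solution $\upsilon$ separates further as $Y_k(\theta) R_{k,m,\varepsilon}(r) Z_m(\omega)$, where $r$ is the hyperbolic radial coordinate and $R_{k,m,\varepsilon}$ satisfies an associated Legendre-type ODE on $[0, r_\varepsilon]$. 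The eigenvalue $\mu_{k,m}(\varepsilon)$ of $\mathcal{DF}^{g_\varepsilon}$ on this mode then becomes an explicit combination of the Dirichlet-to-Neumann quantity $R'_{k,m,\varepsilon}(r_\varepsilon)/R_{k,m,\varepsilon}(r_\varepsilon)$, the tangential Laplace eigenvalues $k(k+805)$ and $m(m+713)/\sinh^2(r_\varepsilon)$, and the constant $-7H_4$.

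The final step is the asymptotic analysis as $r_\varepsilon \to 0$: I expect to show that for each fixed $(k, m)$ with $k + m \ge 1$, the function $\varepsilon \mapsto \mu_{k,m}(\varepsilon)$ is continuous and changes sign exactly once at some $\varepsilon_{k,m}^* > 0$, with $\varepsilon_{k,m}^* \to 0$ as $k + m \to \infty$ (driven by the $1/\sinh^2(r_\varepsilon)$ blow-up of the transverse Laplace eigenvalues). Choosing regular values of $\varepsilon$ between consecutive crossings yields intervals $[a_j, b_j]$ with $a_j, b_j \to 0$ across which $\Ind(\mathcal{DF}^{g})$ jumps, and Theorem~\ref{thm:main_thm} then furnishes a bifurcation instant inside each. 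The main obstacle will be the spectral analysis of the Legendre-type radial ODE: producing sharp-enough asymptotics as $r_\varepsilon \to 0$ to locate the crossings precisely and establish their nondegeneracy. The combinatorial verification that the coefficient of $x^4$ in $(1+x)^{806}(1-x)^{715}$ vanishes is unavoidable but elementary bookkeeping.
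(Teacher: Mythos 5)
Your overall strategy -- verify the hypotheses of the bifurcation theorem and show the index of $\mathcal{DF}^{g_\varepsilon}$ changes as $\varepsilon \to 0$ -- is aligned with the paper, but there are two genuine gaps and one substantial difference in route.

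First, the paper does not invoke Theorem~\ref{thm:main_thm} directly but Corollary~\ref{cor5.7}, which is tailored to doubly-warped products over a ball. This matters: Corollary~\ref{cor5.7} already knows that $\partial X_s$ has constant $H_4$ for all $s$, and it weakens nondegeneracy to $\ker\mathcal{DF}^{g_{s_i}}\subset\mathbb{R}$ with the index computed on $\mathbb{R}^\perp$, which sidesteps the fact that constants may always lie in the kernel because $H_4$ is constrained to be a constant (not a fixed number). If you go straight to Theorem~\ref{thm:main_thm}, you must handle this constant mode explicitly.

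Second, and more importantly, your proposal never identifies the mechanism that actually forces the index to jump. The paper's proof of Lemma~\ref{lem:lemma} produces the leading asymptotics
$H_4 = c_2\kappa^{7}+\mathcal{O}(\kappa^{5})$ and $\iota^*S_3 = c_1\kappa^{5}g+\mathcal{O}(\kappa^{3})$
with $\kappa=\coth\varepsilon$, so $H_4\sim\varepsilon^{-7}$ but $S_3\sim\varepsilon^{-5}$. Restricting $\mathcal{DF}$ to modes $\pi^*\phi$ pulled back from $S^{806}$ (your $m=0$ modes), the normal-derivative contribution $T_3(\eta,\nabla v_\phi)$ is $\mathcal{O}(1)$, and
\[ \mathcal{DF}^g(\pi^*\phi)=\pi^*\bigl[-\delta((\iota^*S_3)\overline{\nabla}\phi)-7(\iota^*H_4)\phi\bigr]+\mathcal{O}(1), \]
so the $-7H_4$ term of order $\varepsilon^{-7}$ dominates the $S_3\Delta$ term of order $\varepsilon^{-5}$ for every fixed spherical-harmonic level $k$, driving that eigenvalue to $-\infty$ and hence the index to $+\infty$. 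Your proposal gestures at ``the Dirichlet-to-Neumann quantity, the tangential eigenvalues, and $-7H_4$'' without saying which wins; the $\varepsilon^{-7}$ vs.\ $\varepsilon^{-5}$ comparison is the crux, and obtaining it requires the explicit combinatorial formulas of Lemma~\ref{comp_lem2}, which is far from ``elementary bookkeeping.'' Likewise, the claim that each $\mu_{k,m}(\varepsilon)$ ``changes sign exactly once'' is unneeded for the conclusion, not proven, and likely false in general; all you need is that the index is eventually unbounded along a sequence of regular $\varepsilon$, as the paper shows via the $m=0$ modes alone.

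Third, your argument for $4$-admissibility is too quick: ``the first three $\sigma_k$ are already strictly positive'' does not by itself let you push $\sigma_4$ from $0$ to positive by a small conformal change with the right boundary behaviour. The paper constructs an explicit conformal factor $u(p,q)=(1+sr^2(q))/(1+s\varepsilon^2)$ (depending on the hyperbolic radius and vanishing to the required order at $\partial X_\varepsilon$) and verifies that the first-order variation of $\sigma_4$ under this deformation is strictly positive; without checking the sign of that derivative, the deformation could just as well make $\sigma_4$ negative. You should supply this computation or an equivalent one.
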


One might wonder there is an infinite family of pairs $(m,n)$ such that the Riemannian product $S^{n} \times H^{m}$ satisfies $\sigma_k =0$. This is shown in ~\cite{Case} to be the case if $k \le 3$. By running the Algcurves(genus) package in Maple, we see that the genus of $\lbrace(m,n): \sigma_4(S^{n} \times H^{m}) = 0 \rbrace$ is three. Thus no such family can exist. 

We use Mathematica to calculate $n=806, m=715$. See details in Section 3.

\section{Background}
\label{sec:Backg}

In this section, we recall the definition of the $\sigma_k$-curvature and its essential properties.

Given $k \in \mathbb{N}$, the $\emph{k-th elementary symmetric function}$ of a symmetric $d\times d$-matrix $B \in \Sym_{d}$ is 
\[ \sigma_k(B) \coloneqq \sum_{i_{1} < \cdots < i_{k}} \lambda_{i_{1}} \cdots \lambda_{i_{k}}, \]
where $\lambda_{i_{1}}, \cdots , \lambda_{i_{k}}$ are the eigenvalues of $B.$ We compute $\sigma_{k}(B)$ via the formula 
\begin{equation}
\label{sigmak}
	\sigma_{k}(B) = \frac{1}{k!} \delta_{i_{1} \cdots i_{k}}^{j_{1} \cdots j_{k}} B_{j_{1}}^{i_{1}} \cdots B_{j_{k}}^{i_{k}},
\end{equation}
where $\delta_{i_{1} \cdots i_{k}}^{j_{1} \cdots j_{k}}$ denotes the generalized Kronecker delta,
\[
	\delta_{i_{1} \cdots i_{k}}^{j_{1} \cdots j_{k}} \coloneqq \begin{cases}
		1, & \text{if } (i_{1} \cdots i_{k}) \text{ is an even permutation of } (j_{1} \cdots j_{k}), \\ 
		-1, & \text{if } (i_{1} \cdots i_{k}) \text{ is an odd permutation of } (j_{1} \cdots j_{k}), \\
		0, & \text{otherwise},
	\end{cases}
\]
and Einstein summation convention is employed. The $\emph{k-th Newton tensor} \text{ of } B$ is the matrix $T_{k}(B) \in \Sym_d $ with components
\begin{equation}
\label{Tk}
	T_{k}(B)_{i}^{j} \coloneqq \frac{1}{k!} \delta_{ii_{1}...i_{k}}^{jj_{1}...j_{k}} B_{j_{1}}^{i_{1}}...B_{j_{k}}^{i_{k}}.
\end{equation}

Given nonnegative integers $k,\ell$ with $k \ge \ell$ and matrices $B, C \in \Sym_{d}, \text{we define}$

$$  \sigma_{k,\ell}(B,C) \coloneqq \frac{1}{k!} \delta_{i_{1} \cdots i_{k}}^{j_{1} \cdots j_{k}} {B_{j_{1}}^{i_{1}}} \cdots B_{j_{\ell}}^{i_{\ell}} C_{j_{\ell+1}}^{i_{\ell+1}} \cdots C_{j_{k}}^{i_{k}}, $$ 
$$ T_{k,\ell}(B,C)_{i}^{j} \coloneqq \frac{1}{k!} \delta_{ii_{1} \cdots i_{k}}^{jj_{1} \cdots j_{k}} {B_{j_{1}}^{i_{1}}} \cdots B_{j_{\ell}}^{i_{\ell}} C_{j_{\ell+1}}^{i_{\ell+1}} \cdots C_{j_{k}}^{i_{k}}. $$
That is, $\sigma_{k,\ell}(B,C) \ (\text{resp. } T_{k,\ell}(B,C))$ is the polarization of $\sigma_k \ (\text{resp. } T_k)$ evaluated at $\ell$ factors of $B \text{ and } k-\ell$ factors of $C$. 

The $\emph{positive k-cone}$ is
\[  \Gamma_{k}^{+} \coloneqq \lbrace B \in \Sym_n \ | \ \sigma_{1}(B), \cdots, \sigma_{k}(B) > 0 \rbrace \]
and its closure is
\[  \overline{\Gamma_{k}^{+}} \coloneqq \lbrace B \in \Sym_n \ | \ \sigma_{1}(B), \cdots, \sigma_{k}(B) \ge 0 \rbrace. \]
Their significance is that $T_{k-1}(B)$ is positive definite  (resp. nonnegative definite) for all $B \in \Gamma_{k}^{+} \ (\text{resp. all } B \in \overline{\Gamma_{k}^{+}})$ and that $\Gamma_{k}^{+} \text{ and } \overline{\Gamma_{k}^{+}}$ are convex~\cite{10}.

The $\emph{Schouten tensor}$ $P$ of $(X^{n+1},g)$ is the section
\[ P \coloneqq \frac{1}{n-1} \left( \text{Ric} - \frac{R}{2n} g\right) \]
of $S^{2}T^{*}X$, where Ric and R are the Ricci tensor and scalar curvature, respectively, of $g$. The $\sigma_{k}$-$\emph{curvature}$ of $(X, g)$ is 
\[ \sigma_{k}^{g} \coloneqq \sigma_{k}(g^{-1}P), \]
where $g^{-1}$ is the musical isomorphism mapping $T^{*}X$ to $TX$ and its extension to tensor bundles.

\begin{defn}
A Riemannian metric $g$ is $k$-$admissible$ if $g \in \overline{\Gamma^{+}_{k}}$ and there is a metric $\hat{g} \in \Gamma^{+}_{k}$  conformal to $g$ and such that $g\rvert_{TM} = \hat{g}\rvert_{TM}.$
\end{defn}

Suppose now that ($X^{n+1},g$) is a compact Riemannian manifold with boundary $M^{n} = \partial X$ which has unit volume with respect to the induced metric $h \coloneqq \iota^{*}g$. Denote by $h^{-1}$ the musical isomorphism mapping $T^{*}M \text{ to } TM$ and its extension to tensor bundles. The $H_k$\emph{-curvature} of $M$ is  
\[ H_{k}^{g} \coloneqq \sum_{j=0}^{k-1} \frac{(2k-j-1)!(n+1-2k+j)!}{j!(n+1-k)!(2k-2j-1)!!} \sigma_{2k-j-1,j} \ (h^{-1} \iota^{*}P, h^{-1}A). \]
and
\[ S_{k-1} \coloneqq \sum_{j=0}^{k-2} \frac{(2k-j-3)!(n+2-2k+j)!}{j!(n+1-k)!(2k-2j-3)!!} T_{2k-j-3,j} \ (h^{-1} \iota^{*}P, h^{-1}A). \]
The following corollary is a consequence of Theorem~\ref{thm:main_thm}; see ~\cite{Case}.
\begin{cor}
\label{cor5.7}
Fix $4 \le j \in \mathbb{N}$ and $\alpha \in (0,1).$ Let $a \in \mathbb{R}_+$ and denote by $\overline{B}^{n+1}(a)$ the closed ball of radius $a$ in $\mathbb{R}^{n+1}.$ Let $(N^m,g_N)$ be a compact Einstein manifold and suppose that there is an odd smooth function $f: (-a,a) \rightarrow \mathbb{R}$ and an even smooth function $\psi: (-a,a) \rightarrow \mathbb{R}_+$ such that
\[ g \coloneqq dr^{2} \oplus f^{2}(r) \ d\theta^{2} \oplus \psi^{2}(r) \ g_N \]
defines a locally conformally flat metric on $X \coloneqq \overline{B}^{n+1}(a) \times N^m$ such that $g \in \overline{\Gamma^{+}_4}$ and $\sigma^{g}_4 =0,$ where $r(x) = |x|$ for $x \in \overline{B}^{n+1}(a).$ Given $s \in (0,a),$ set
\[ X_s \coloneqq \lbrace (x,y) \in X \ | \ r(x) \le s \rbrace \]
and let $g_s$ denote the restriction of $g$ to $X_s$. Assume that there are $s_{1},s_{2} \in (0,a)$ such that $s_{1} < s_{2}$ and: 
\begin{enumerate}
\item for every $s \in [a,b],$ the metric $g_{s}|_{TM}$ is $4$-admissible; 
\item for every $s \in [a,b],$ it holds that $T_{3}^{g_s} > 0$ and $S_{3}^{g_s} > 0;$
\item $\ker \mathcal{DF}^{g_{s_1}}, \ker \mathcal{DF}^{g_{s_2}} \subset \mathbb{R},$ where $\mathbb{R}$ denotes the space of constant functions; and
\item $ \Ind(\mathcal{DF}^{g_{s_1}}) \ne \Ind(\mathcal{DF}^{g_{s_2}})$ when computed on $\mathbb{R}^{\perp}.$ 
\end{enumerate}
Then $\partial X_{s}$ has constant $H_4$-curvature for all $s \in (0,a)$, and there exists a bifurcation instant $s_{*} \in (s_{1},s_{2})$ for the family $(X_{s},g_{s})$.
\end{cor}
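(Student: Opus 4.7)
The plan is to apply Theorem~\ref{thm:main_thm} to the family $\{g_s\}_{s \in [s_1, s_2]}$ after a smooth rescaling that enforces the unit-volume normalization on the boundary. Hypotheses (1) and (2) of the corollary coincide with hypotheses (1) and (2) of Theorem~\ref{thm:main_thm}, so the substantive work is to (a) show that $\partial X_s$ has constant $H_4$-curvature for every $s \in (0,a)$, (b) incorporate the unit-volume rescaling into a smooth family, and (c) reconcile the kernel and index conditions of the corollary---which are phrased modulo the space $\bR$ of constants---with the nondegeneracy and index conditions of Theorem~\ref{thm:main_thm}.

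For step (a), I observe that $\partial X_s = S^n \times N$ carries the induced metric $f(s)^2\, d\theta^2 \oplus \psi(s)^2\, g_N$, and that every tensor entering the formula for $H_4$---the Schouten tensor of $g$ restricted to $TM$, the second fundamental form along $\partial X_s$, and the induced metric---is built from the round sphere metric on $S^n$, the Einstein metric $g_N$, and the scalars $f(s), f'(s), \psi(s), \psi'(s)$. Since $g_N$ is Einstein, its Schouten tensor is a constant multiple of $g_N$, so the algebraic ingredients for $H_4$ are invariant under $\mathrm{O}(n+1) \times \mathrm{Isom}(N, g_N)$ acting fiberwise. It follows that $H_4^{g_s}$ is constant on $\partial X_s$, and the same argument yields the block-diagonal structure of $S_3^{g_s}$ and $T_3^{g_s}$ needed elsewhere.

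For step (b), I set $c_s \coloneqq \vol_{g_s|_{TM}}(\partial X_s)^{-1/n}$, which varies smoothly in $s$, and define $\cg_s \coloneqq c_s^2 g_s$. The family $\{\cg_s\}_{s \in [s_1, s_2]}$ then has unit-volume boundary with constant $H_4^{\cg_s}$ while preserving $\sigma_4^{\cg_s} = 0$, local conformal flatness, $4$-admissibility, and positivity of $T_3$ and $S_3$---all invariant under homothety---so the family meets the hypotheses (1) and (2) of Theorem~\ref{thm:main_thm}. For step (c), I note that a constant conformal factor corresponds to an overall homothety, which preserves both $\sigma_4 = 0$ and the constancy of $H_4$, so $\bR$ always lies inside $\ker \mathcal{DF}^{g_s}$ and the natural nondegeneracy statement is nondegeneracy on $\bR^\perp$. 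The corollary's hypothesis $\ker \mathcal{DF}^{g_{s_i}} \subset \bR$ forces equality and therefore gives precisely the nondegeneracy on $\bR^\perp$ required by Theorem~\ref{thm:main_thm} in the reduced setting; hypothesis (4) of the corollary supplies the corresponding index difference. Theorem~\ref{thm:main_thm} applied to $\{\cg_s\}$ then yields a bifurcation instant $s_* \in (s_1, s_2)$. The main obstacle I anticipate is step (c): one must carefully verify that the Case--Moreira--Wang theorem is genuinely applied modulo the homothetic degree of freedom, so that the conditions of the corollary on $\bR^\perp$ are exactly the reductions of the conditions of Theorem~\ref{thm:main_thm}.
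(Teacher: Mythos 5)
The paper does not actually prove this corollary; it simply cites \cite{Case}, so there is no in-paper proof to compare against. Your strategy of deducing the corollary from Theorem~\ref{thm:main_thm} by a volume-normalizing rescaling is a sensible route, and steps (a) and (b) are in essence correct, but step (c) contains a genuine error that would sink the argument as written.

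The claim that ``$\bR$ always lies inside $\ker\mathcal{DF}^{g_s}$'' is false when $H_4\neq 0$. The paper's explicit formula
\[
\mathcal{DF}(\phi)=T_3(\eta,\nabla\phi)-\overline\delta\bigl(S_3(\overline\nabla\phi)\bigr)-7H_4\,\phi
\]
gives, for constant $\phi$, $\mathcal{DF}(\phi)=-7H_4\phi$, which is a nonzero multiple of $\phi$ precisely because $H_4$ is a nonzero constant (and in the examples of Section~4 it is strictly positive of order $\kappa^7$). Your heuristic---that a constant conformal factor is a homothety preserving $\sigma_4=0$ and constancy of $H_4$---correctly identifies that the equations are preserved, but overlooks that the homothety changes the value of $H_4$ and, more importantly, violates the unit-volume constraint that is built into $\mathcal{F}$ and into the family in Theorem~\ref{thm:main_thm}. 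The right picture is that $\bR$ is a one-dimensional $\mathcal{DF}$-invariant \emph{eigenspace} with eigenvalue $-7H_4$, not a piece of the kernel. Consequently $\mathcal{DF}$ block-decomposes as $\mathcal{DF}|_\bR\oplus\mathcal{DF}|_{\bR^\perp}$; when $H_4>0$ the $\bR$-block contributes nothing to the kernel and contributes exactly $+1$ to the index at both endpoints, so hypothesis (3) of the corollary ($\ker\subset\bR$) is equivalent to nondegeneracy of the full operator, and hypothesis (4) on $\bR^\perp$ is equivalent to a difference in the full index. That is the correct reconciliation---it is what you need to write, and it is where the hypothesis $H_4\neq0$ (silently available here) enters.

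A smaller issue in step (a): constancy of $H_4$ on $\partial X_s$ does not follow from invariance under $\mathrm{O}(n+1)\times\mathrm{Isom}(N,g_N)$, because $\mathrm{Isom}(N,g_N)$ need not act transitively on a general compact Einstein manifold (e.g.\ a compact hyperbolic $N$ has finite isometry group). The correct reason is purely pointwise and algebraic: for the warped-product metric $g=dr^2\oplus f^2(r)\,d\theta^2\oplus\psi^2(r)\,g_N$, the Einstein condition on $N$ forces $h^{-1}\iota^*P$ and $h^{-1}A$ to be block-diagonal with each block a scalar (depending only on $s$) times the identity; hence every $\sigma_{k,\ell}$ entering $H_4$, and every $T_{k,\ell}$ entering $S_3$, is the same number at every point of $\partial X_s$. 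Your conclusion is right, but you should state the block-scalar reason rather than the isometry-group reason.
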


\section{Computations}
\label{sec:Comp}

In this section, we describe the method that we use to compute results for Section 4. 

We want to find the solutions that satisfy $\sigma_4=0$. We use Mathematica to solve the following equation:
\[ \sigma_4(A_{m,n}) = \binom{m}{4} - \binom{m}{3}\binom{n}{1} + \binom{m}{2}\binom{n}{2} - \binom{m}{1}\binom{n}{3} + \binom{n}{4} =0 \]

\noindent where $A_{m,n}$ is a diagonal matrix with entries $-1$ and $1$. The number of negative eigenvalues is $m$ and the number of positive eigenvalues is $n$. 

Here is a full list of solutions in terms of $(m,n)$ which satisfy $\sigma_4=0$ when $m <10000$: 
\begin{equation}
\label{soln}
(1,1), \ (1,2), \ (1,7), \ (3,5), \ (7,10), \ (30,36), \ (715,806), \ (7476, 7567)
\end{equation}
We also consider all solutions $(m,n)$ which satisfied $\sigma_5=0$. Excluding the trivial solutions, namely $m=n$, this is the full list of solutions when $m<1000$: 
\[ (1,2), \ (1,3), \ (1,9), \ (3,7), \ (3,14), \ (14,22), \ (22,45), \ (28,39) \ (133,156) \]
In the context of Theorem~\ref{thm:thm1.2} and ~\ref{thm:thm1.3}, we restrict our attention to pairs in $\eqref{soln}$ with $n+m$ strictly greater than 8 because of nonuniqueness results for the $\sigma_k$-curvature~\cite{26,24,25}. We also look for pairs which satisfy the ellipticity condition $\sigma_k \ge 0$, $k < 4$. The only such pair is (715, 806). We use this pair to compute everything in the rest of our paper. 

Given a diagonal matrix $B$, we let $B(i_\ell)$ be the entry on the $(i_\ell, i_\ell)$ component. If $B$ and $C$ are simultaneously diagonalized, then
\begin{equation}
\label{sigma}
\sigma_{k,\ell}(B,C) = \frac{1}{k!} \sum_{i_{1}, \cdots ,i_{k} \\ \text{distinct}} B(i_1) \cdots B(i_\ell) C(i_{\ell+1}) \cdots C(i_k)
\end{equation}
\begin{equation}
\label{T}
T_{k,\ell}(B,C)_{i}^{i} = \frac{1}{k!} \sum_{i,i_{1}, \cdots ,i_{k} \text{distinct}} B(i_1) \cdots B(i_\ell) C(i_{\ell+1}) \cdots C(i_k)
\end{equation}
Here are the computations that are relevant for the rest of the paper. We are interested in two cases, one is where we remove a negative eigenvalue, the other one is where we remove a positive eigenvalue.

A \emph{$(m,n)$-block diagonal matrix} is an $(m+n) \times (m+n)$ matrix with an $m \times m$ block $\lambda I_m$ in the upper left, an $n \times n$ block $\mu I_n$ in the lower right, and zeros everywhere else. We denote such a matrix by $\lambda I_m \oplus \mu I_n$.
\begin{lem}
\label{comp_lem1}
Suppose $B$ is the $(m,n-1)$ matrix obtained from $A_{m,n}$ by removing the last column and last row, and suppose $C$ is the $(m,n-1)$ block diagonal matrix with $(\lambda, \mu) = (0,\kappa)$. Then
\end{lem}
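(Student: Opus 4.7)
The plan is to apply the polarization formulas \eqref{sigma} and \eqref{T} directly, organizing the sum over distinct multi-indices according to which of the two diagonal blocks each $i_j$ lies in.

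First, I would observe that $C$ vanishes on the first $m$ diagonal entries. Hence the only nonvanishing contributions to
\[ \sigma_{k,\ell}(B,C) \;=\; \frac{1}{k!}\sum_{i_1,\ldots,i_k \text{ distinct}} B(i_1)\cdots B(i_\ell)\, C(i_{\ell+1})\cdots C(i_k) \]
are those tuples with $i_{\ell+1},\ldots,i_k \in \{m+1,\ldots,m+n-1\}$. Each such factor equals $\kappa$, so every nonzero term carries the overall scalar $\kappa^{k-\ell}$.

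Next, I would partition the remaining $B$-slots $i_1,\ldots,i_\ell$ by how many of them---say $p$---lie in the negative block $\{1,\ldots,m\}$ (each contributing $-1$) versus the positive block $\{m+1,\ldots,m+n-1\}$ (each contributing $+1$). Counting ordered distinct tuples with a given $p$, I get
\[ \binom{\ell}{p}\,\frac{m!}{(m-p)!}\,\frac{(n-1)!}{(n-1-k+p)!}, \]
where $\binom{\ell}{p}$ selects which $B$-slots sit in the negative block, the first falling factorial places $p$ distinct indices in the negative block, and the second places the remaining $k-p$ distinct indices (covering the $\ell-p$ positive $B$-slots together with all $k-\ell$ $C$-slots) in the positive block. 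Summing against $(-1)^p$ and over $p$ yields the desired closed form for $\sigma_{k,\ell}(B,C)$.

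For $T_{k,\ell}(B,C)_i^i$, the sum runs over $(i_1,\ldots,i_k)$ that are distinct and also distinct from the fixed index $i$, so the same analysis applies with the pool of indices in whichever block contains $i$ reduced by one: for $i\le m$ one replaces $m$ by $m-1$ in the above falling factorials, while for $i>m$ one replaces $n-1$ by $n-2$. This gives a two-case expression indexed by the block containing the free index.

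The main obstacle is purely combinatorial bookkeeping---keeping the block decomposition of each $i_j$ separate from the $B$-slot/$C$-slot decomposition without double counting, and arranging the resulting finite sum over $p$ in the specific shape that will simplify cleanly when this lemma is plugged into the later evaluations of $\sigma_4$, $H_4$, $T_3$, and $S_3$ on products. I expect no substantive analytic difficulty; the care is all in enumeration and sign tracking.
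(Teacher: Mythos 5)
Your proposal is correct and matches the paper's own (very terse) proof, which simply invokes the combinatorial evaluation of \eqref{sigma} and \eqref{T} over distinct index tuples partitioned between the two diagonal blocks. You have spelled out the bookkeeping the paper leaves implicit: restricting the $C$-slots to the positive block since $C$ vanishes on the first $m$ entries, organizing the $B$-slots by the count $p$ landing in the negative block with the sign $(-1)^p$, and reducing the relevant block's pool by one in the $T_{k,\ell}$ case according to where the free index $i$ sits.
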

\begin{flalign*}
&\sigma_{j,0} (B,C) = \binom{n-1}{j}\kappa^{j},\\
&T_{j,0} = \binom{n-1}{j}\kappa^{j}I_{m} \oplus \binom{n-2}{j}\kappa^{j}I_{n}
\end{flalign*}
for any $j \in \mathbb{N}$. Moreover, 
\begin{flalign*}
&\sigma_{6,1}(B,C)= \frac{(n-m-6)(n-5)(n-4)(n-3)(n-2)(n-1)}{6!}\kappa^{5} \\
&\sigma_{5,2}(B,C)= \frac{(n-3)(n-2)(n-1)(n^{2}+m^{2}-2mn-9n+7m+20)}{5!}\kappa^{3} \\
&\sigma_{4,3}(B,C)= \frac{(n-m-2)(n-1)(n^2+m^2-2mn-7n+m+12)}{4!}\kappa \\
&T_{4,1} = \frac{(n-m-3)(n-3)(n-2)(n-1)}{4!}\kappa^3I_{n} \\
& \qquad \oplus \frac{(n-m-5)(n-4)(n-3)(n-2)}{4!}\kappa^3 I_{m} \\
&T_{3,2} = \frac{(n-1)(n^2+m^2-2mn-3n+m+4)}{3}\kappa I_{n} \\
& \qquad \oplus \frac{(n-2)(n^2+m^2-2mn-7n+5m+12)}{3}\kappa I_{m}
\end{flalign*}
\begin{proof}
We perform the above calculations by counting $k,\ell$ from equations $\eqref{sigma}$ and $\eqref{T}$ where $\ell$ is the number of eigenvalues we get from $B$ and $k-\ell$ is the number of eigenvalues we get from $C$. 
\end{proof}
\begin{lem}
\label{comp_lem2}
Suppose $B$ is the $(m-1,n)$ matrix obtained from $A_{m,n}$ by removing the first column and first row, and suppose $C$ is the $(m-1,n)$ block diagonal matrix with $(\lambda, \mu) = (\kappa,0)$. Then
\end{lem}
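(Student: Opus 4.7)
The plan is to follow the same counting strategy used for Lemma~\ref{comp_lem1}, namely apply the diagonal formulas~\eqref{sigma} and~\eqref{T} and organize the sum by how many of the dummy indices fall in the $-1$-block versus the $+1$-block of $B$. The shortest route, however, is to reduce the present lemma to Lemma~\ref{comp_lem1} by a sign-flip symmetry, and then verify one or two cases by direct counting as a sanity check.

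Concretely, let $B' \coloneqq -B$ and $C' \coloneqq C$, and permute the diagonal so that the first $m-1$ entries of $B'$ (which equal $+1$) are moved to the end. After this relabelling $B'$ is an $(n, m-1)$ matrix with $n$ entries of $-1$ followed by $m-1$ entries of $+1$, and $C'$ is the corresponding $(n, m-1)$ block diagonal matrix with value $\kappa$ on the positive block. This is precisely the setup of Lemma~\ref{comp_lem1} with the roles of $m$ and $n$ interchanged. Since $\sigma_k$ and $T_k$ only depend on the multiset of eigenvalues, the permutation does not affect anything, and from the definitions one has
\begin{equation*}
\sigma_{k,\ell}(-B,C) = (-1)^{\ell}\,\sigma_{k,\ell}(B,C),\qquad T_{k,\ell}(-B,C) = (-1)^{\ell}\,T_{k,\ell}(B,C).
\end{equation*}
Thus every formula in Lemma~\ref{comp_lem2} is obtained from the corresponding formula in Lemma~\ref{comp_lem1} by swapping $m\leftrightarrow n$ in the polynomial factors, multiplying by $(-1)^{\ell}$, and, for the Newton tensor expressions, exchanging which summand of the direct sum is labelled $I_m$ versus $I_n$ (since the $+1$- and $-1$-blocks of $B$ have traded positions).

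As a direct check, for $\sigma_{6,1}(B,C)$ the indices $i_2,\dots,i_6$ must all lie in the $(m-1)$-block where $C=\kappa$, contributing $(m-1)(m-2)(m-3)(m-4)(m-5)\kappa^{5}$ ordered tuples, while $i_{1}$ ranges over the remaining $m+n-6$ diagonal positions with $B(i_{1})$ summing to $n-(m-6)=n-m+6$. Dividing by $6!$ recovers exactly what the symmetry argument predicts. The $T_{k,\ell}$ formulas are obtained the same way: fix a diagonal position $i$, split into cases according to whether $i$ lies in the negative or positive block of $B$, and count the remaining ordered $k$-tuples exactly as above.

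There is no conceptual obstacle; the only work is careful bookkeeping of signs and of which block the direct-sum summands refer to. The calculation for $\sigma_{5,2}$, $\sigma_{4,3}$, $T_{4,1}$, and $T_{3,2}$ is entirely analogous, and in each case the resulting expression is just the Lemma~\ref{comp_lem1} expression with the substitution $m\leftrightarrow n$ and an overall sign $(-1)^{\ell}$ (together with the block swap in the Newton tensors).
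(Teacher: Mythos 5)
Your approach is valid and takes a genuinely different route from the paper. The paper proves Lemma~\ref{comp_lem2} exactly as it proves Lemma~\ref{comp_lem1}: a direct count of terms in the diagonal formulas~\eqref{sigma} and~\eqref{T}, split by which block each dummy index falls in. You instead reduce to Lemma~\ref{comp_lem1} by the involution $B\mapsto -B$ together with a permutation of the diagonal, using $\sigma_{k,\ell}(-B,C)=(-1)^\ell\sigma_{k,\ell}(B,C)$ and the analogous identity for $T_{k,\ell}$. This is a cleaner and more systematic derivation, and it makes the structural parallel between the two lemmas transparent. Both the symmetry argument and your direct sanity check for $\sigma_{6,1}$ are carried out correctly.

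One substantive point you should flag, though: your argument does not reproduce the paper's stated formulas for the entries with odd $\ell$. Both the symmetry argument and your direct count give
\[
\sigma_{6,1}(B,C)=\frac{(n-m+6)(m-1)(m-2)(m-3)(m-4)(m-5)}{6!}\kappa^5
= -\frac{(m-n-6)(m-1)\cdots(m-5)}{6!}\kappa^5,
\]
whereas the lemma as printed has $(m-n-6)$ with no overall minus. The same applies to $\sigma_{4,3}$ ($\ell=3$), where the symmetry argument forces an extra $(-1)^3$ that is absent in the printed formula. By contrast, the $T_{k,\ell}$ entries and $\sigma_{5,2}$ (even $\ell$) do carry the expected factor, and there the paper's printed formulas agree with yours after the label exchange $I_m\leftrightarrow I_n$. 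So the discrepancy appears to be a sign error in the paper's statement rather than in your proof, but as written your last sentence (``recovers exactly what the symmetry argument predicts'') glosses over the fact that what the symmetry predicts is not what the lemma asserts. You should state the corrected formulas explicitly and note the disagreement, both as a matter of rigor and because the downstream uses of the lemma in Section~4 only draw on the leading $\sigma_{7,0}$ term, so the sign error would otherwise go unnoticed.
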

\begin{flalign*}
&\sigma_{j,0}(B,C) = \binom{m-1}{j}\kappa^{j},\\
&T_{j,0} = \binom{m-1}{j}\kappa^{j}I_{m} \oplus \binom{m-2}{j}\kappa^{j}I_{n}
\end{flalign*}
for any $j \in \mathbb{N}$. Moreover,
\begin{flalign*}
&\sigma_{6,1}(B,C)= \frac{(m-n-6)(m-5)(m-4)(m-3)(m-2)(m-1)}{6!}\kappa^{5} \\
&\sigma_{5,2}(B,C)= \frac{(m-3)(m-2)(m-1)(m^{2}+n^{2}-2mn-9m+7n+20)}{5!}\kappa^{3} \\
&\sigma_{4,3}(B,C)= \frac{(m-n-2)(m-1)(m^2+n^2-2mn-7m+n+12)}{4!}\kappa \\
&T_{4,1} = \frac{-(m-n-5)(m-4)(m-3)(m-2)}{4!}\kappa^3I_{n} \\
& \qquad \oplus \frac{-(m-n-3)(m-3)(m-2)(m-1)}{4!}\kappa^3I_{m} \\
&T_{3,2} = \frac{(m-2)(m^2+n^2-2mn-7m+5n+12)}{3}\kappa I_{n} \\
& \qquad \oplus \frac{(m-1)(m^2+n^2-2mn-3m+n+4)}{3}\kappa I_{m}
\end{flalign*}
\begin{proof}
We perform the above calculations by counting $k,\ell$ from equations $\eqref{sigma}$ and $\eqref{T}$ where $\ell$ is the number of eigenvalues we get from $B$ and $k-\ell$ is the number of eigenvalues we get from $C$. 
\end{proof}

\section{Proofs of Theorem~\ref{thm:thm1.2} and Theorem~\ref{thm:thm1.3}}
\label{sec:proofs}

We begin by considering the interior geometry of certain Riemannian products.

\begin{lem}
 \label{lem:basic_lemma}
Let $(M^{806},g_M) \text{ and } (H^{715},g_H)$ be Einstein manifolds with $\Ric_{g_M} =805g_M \text{ and } \Ric_{g_H} = -714g_H,$ respectively, and let $(X^{1521},g)$ denote their Riemannian product.
Then $(X,g)$ is such that 
\[ \sigma_1 = \frac{91}{2}, \ \sigma_2=\frac{3380}{4}, \ \sigma_3=\frac{56420}{8}, \ \sigma_4 \equiv 0. \]
Moreover, 
\[ T_3 = \frac{483}{52}(715g_M \oplus 806g_H).\]
\end{lem}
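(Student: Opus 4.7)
The proof should be a direct computation leveraging the product structure. First, I would compute $\Ric_g = 805\, g_M \oplus (-714)\, g_H$ directly from the Einstein conditions on the two factors, giving $R_g = 805 \cdot 806 - 714 \cdot 715 = 138{,}320$. Substituting into the Schouten tensor formula with $\dim X = n+1 = 1521$, one has $R_g/(2n) = 138{,}320/3040 = 91/2$, and the arithmetic identity $805 - 91/2 = 1519/2 = -(-714 - 91/2)$ combined with the prefactor $1/(n-1) = 1/1519$ yields
\[
P \;=\; \tfrac{1}{2}\, g_M \oplus \bigl(-\tfrac{1}{2}\bigr) g_H.
\]
Consequently $2\, g^{-1}P$ is exactly the matrix $A_{715,806}$ of Section~\ref{sec:Comp}.

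Given this, the scalar identities for $k = 1,2,3,4$ reduce to evaluating the binomial sum $\sigma_k(A_{715,806}) = \sum_{j=0}^k (-1)^j\binom{715}{j}\binom{806}{k-j}$ and dividing by $2^k$. The vanishing $\sigma_4 = 0$ is immediate since $(715,806)$ appears in the list~\eqref{soln}, while $\sigma_1$, $\sigma_2$, $\sigma_3$ are routine binomial arithmetic producing $91/2$, $3380/4$, $56420/8$ respectively.

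For the Newton tensor I would use the diagonal identity $T_3(B)_i^i = e_3(\lambda_1,\dots,\widehat{\lambda_i},\dots,\lambda_d)$, splitting into cases according to whether $i$ lies in the $M$- or $H$-factor. Removing one positive eigenvalue leaves $\tfrac{1}{2} A_{715, 805}$; removing one negative eigenvalue leaves $\tfrac{1}{2} A_{714, 806}$. The Pascal-type recursions $\sigma_3(A_{m,n+1}) = \sigma_3(A_{m,n}) + \sigma_2(A_{m,n})$ and $\sigma_3(A_{m+1,n}) = \sigma_3(A_{m,n}) - \sigma_2(A_{m,n})$ reduce each case to $\sigma_3(A_{715,806}) = 56{,}420$ together with the appropriate $\sigma_2$, yielding $T_3|_M = (53{,}130/8)\, g_M$ and $T_3|_H = (59{,}892/8)\, g_H$. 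A final arithmetic check ($\gcd(53{,}130,\,5{,}720) = 110$ and $\gcd(59{,}892,\,6{,}448) = 124$) exhibits the common factor $\tfrac{483}{52}$ with respective weights $715$ and $806$, producing the claimed $T_3 = \tfrac{483}{52}(715\, g_M \oplus 806\, g_H)$.

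The only real obstacle is arithmetic bookkeeping; conceptually everything pivots on the identity $805 + 714 = 1519 = n - 1$, which is precisely what produces Schouten eigenvalues $\pm\tfrac{1}{2}$ and places $(715, 806)$ on the locus $\sigma_4 = 0$.
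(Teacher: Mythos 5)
Your proof is correct and follows essentially the same route as the paper: reduce to the algebraic fact $g^{-1}P = \tfrac12 A_{715,806}$ (which the paper states without the intermediate steps you supply) and then evaluate the elementary symmetric polynomials and Newton tensor of a two-eigenvalue matrix. I checked the arithmetic: $R=138{,}320$, $R/2n = 91/2$, $P = \tfrac12 g_M \oplus (-\tfrac12)g_H$, and the block values $T_3|_M = 53{,}130/8 = 483\cdot 715/52$, $T_3|_H = 59{,}892/8 = 483\cdot 806/52$ all hold. One small caveat in your closing remark: the identity $805+714 = n-1$ (equivalently $\dim M + \dim H - 2 = \dim X - 2$) is automatic for any product of unit-curvature Einstein factors and is what yields Schouten eigenvalues $\pm\tfrac12$; the vanishing of $\sigma_4$ is a separate combinatorial coincidence of the specific pair $(715,806)$, not a consequence of that identity.
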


\begin{proof}
Let $e_1, \dotsb ,e_{806}$ be a basis for $T_{p}M$ and $f_1, \dotsb , f_{715}$ be a basis for $T_{q}H$. Then at $(p,q) \in M \times H,$ with respect to the basis $e_1, \dotsb ,e_{806}, f_1, \dotsb , f_{715}$ of $T_{(p,q)} M \times H$,
\[ g^{-1} P = \frac{1}{2} A_{715,806} \]
The computations of $\sigma_4$ and $T_3$ readily follow.
\end{proof}

\subsection{Products of a spherical cap and a hyperbolic manifold}
In this subsection we apply Corollary~\ref{cor5.7} to products of a spherical cap and a hyperbolic manifold with sectional curvature $1$ and $-1$, respectively. Note that this normalization ensures that the product is locally conformally flat. Our first task is to study the geometry of the boundary of these products. 
\begin{lem}
 \label{lem:lemma1}
Denote $(S^{806},d\theta^2)$ and $(H^{715}, g_H)$ 
the round $806$-sphere of constant sectional curvature 1 and $715$-dimensional hyperbolic manifold of constant sectional curvature $-1$, respectively. Given $\varepsilon \in (0,\pi/2)$, set
\[ S_\varepsilon^{806} = \lbrace x \in S^{806} \ | \ r(x) \le \varepsilon \rbrace, \]
where $r$ is the geodesic distance from a fixed point $p \in S^{806}$. Let $(X_\varepsilon^{1521},g)$
denote the Riemannian product of $(S_\varepsilon^{806},d\theta^2)$ and $(H^{715}, g_H)$, and let $\iota$ denote the inclusion of 
$H$ into $\partial X_\varepsilon$. Let $\kappa = \cot \varepsilon$ denote the mean curvature of $\partial S_\varepsilon^{806}$ in $S_\varepsilon^{806}$.

\noindent Then $(X_\varepsilon,g)$ is such that 
\begin{equation}
\sigma_1 = \frac{91}{2}, \ \sigma_2=\frac{3380}{4}, \ \sigma_3=\frac{56420}{8}, \ \sigma_4 \equiv 0, \ T_3 > 0.
\end{equation}
Moreover $g|_{T\partial X_\varepsilon}$ is 4-admissible and the boundary $\partial X_\varepsilon$ is such that $H_4$ is a nonnegative constant, $S_3>0$, and
\[ H_4 = \frac{11,194,421,414,880}{28,977,203}\kappa^{7} + \mathcal{O}(\kappa^{5}) \] 
\[ \iota^{*}S_2 = \frac{927,410,178,387}{144,886,015}\kappa^{5} + \mathcal{O}(\kappa^{3}) \]
%\[ H_4 = \frac{48}{(m+n-8)(m+n-7)(m+n-6)(m+n-5)}\binom{n-1}{7}\kappa^{7} + \mathcal{O}(\kappa^{5}) \] 
%\[ \iota^{*}S_2 = \frac{8}{(m+n-7)(m+n-6)(m+n-5)}\binom{n-1}{5}\kappa^{5} + \mathcal{O}(\kappa^{3}) \]
as $\varepsilon \rightarrow 0$
\end{lem}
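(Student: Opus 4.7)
The plan is to combine the interior calculation in Lemma~\ref{lem:basic_lemma} with a direct analysis of the extrinsic geometry of $\partial X_\varepsilon = \partial S^{806}_\varepsilon \times H^{715}$, and then to plug the resulting simultaneously diagonalized tensors into the formulas from Section~\ref{sec:Backg} and Section~\ref{sec:Comp}. The interior statements are immediate: the Riemannian product $(X_\varepsilon, g)$ is locally isometric to the full product $(S^{806} \times H^{715}, d\theta^2 \oplus g_H)$ at every interior point, so Lemma~\ref{lem:basic_lemma} yields the stated values of $\sigma_1, \sigma_2, \sigma_3$ and $\sigma_4 \equiv 0$ on $X_\varepsilon$, as well as $T_3 = \tfrac{483}{52}(715\, g_M \oplus 806\, g_H)$, which is manifestly positive definite. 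Consequently $g \in \overline{\Gamma^+_4}$; to upgrade to $4$-admissibility I would exhibit an interior conformal factor $u$ with $\hat g := e^{2u} g \in \Gamma^+_4$ while $g|_{T\partial X_\varepsilon} = \hat g|_{T\partial X_\varepsilon}$, which is standard because the linearization of $\sigma_4$ at $g$ is controlled by the elliptic operator $-\divsymb(T_3\, \nabla\,\cdot\,)$.

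For the boundary data, I would next analyze the second fundamental form. The hyperbolic factor is totally geodesic in the product, and the geodesic sphere $\partial S^{806}_\varepsilon$ is a totally umbilic round $805$-sphere in $S^{806}$ with principal curvatures $\kappa = \cot\varepsilon$. Hence $h^{-1}A$ is diagonal with $805$ eigenvalues $\kappa$ in the $\partial S^{806}_\varepsilon$ directions and $715$ eigenvalues $0$ in the $H^{715}$ directions, while $h^{-1}\iota^* P = \tfrac{1}{2}(-I_{715} \oplus I_{805})$ by Lemma~\ref{lem:basic_lemma}. These two tensors are thus simultaneously diagonalized in exactly the form required by Lemma~\ref{comp_lem1} with $(m,n) = (715, 806)$, with $B$ the matrix obtained from $A_{715,806}$ by deleting the last positive eigenvalue, and $C = 0\,I_{715} \oplus \kappa\, I_{805}$; the rescaling $h^{-1}\iota^* P = \tfrac{1}{2} B$ contributes a factor $2^{-\ell}$ in every occurrence of $\sigma_{k,\ell}(h^{-1}\iota^* P, h^{-1} A) = 2^{-\ell}\,\sigma_{k,\ell}(B, C)$ and analogously for $T_{k,\ell}$.

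Combining with the definitions of $H_4$ and the boundary $S$-invariant from Section~\ref{sec:Backg} expresses each as a finite weighted sum indexed by $j$ of the monomials in $\kappa$ output by Lemma~\ref{comp_lem1}, of degree $7-2j$ and $5-2j$ respectively. Therefore $H_4$ is a polynomial in $\kappa$ of degree seven whose $\kappa^7$ coefficient originates only from the $j = 0$ summand $\binom{805}{7}\kappa^7$, and the boundary $S$-invariant is a polynomial in $\kappa$ of degree five whose $\kappa^5$ coefficient likewise originates only from $j = 0$. Multiplying these by the explicit rational prefactors in the definitions, with boundary dimension $n = 1520$ and $k = 4$, is then a purely arithmetic task that produces the stated fractions. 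That $H_4$ is a constant on $\partial X_\varepsilon$ is immediate, since both $h^{-1}\iota^* P$ and $h^{-1}A$ have eigenvalues independent of the basepoint by homogeneity of each factor, and nonnegativity of $H_4$ for small $\varepsilon$ follows from the positivity of the leading coefficient. Positivity of $S_3$ on $\partial X_\varepsilon$ for every $\varepsilon \in (0, \pi/2)$ follows similarly, by checking that the lower order coefficients remain nonnegative for all $\kappa > 0$ with $(m,n) = (715, 806)$.

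The main obstacle is bookkeeping: tracking the precise rational prefactors $\frac{(2k-j-1)!(n+1-2k+j)!}{j!(n+1-k)!(2k-2j-1)!!}$ and its analogue for $S_3$ with $n = 1520$ and $k = 4$, combining them with the polynomial values from Lemma~\ref{comp_lem1}, and verifying that the top coefficients reduce to the exact fractions stated in the lemma. This is the same kind of computer-assisted calculation that enabled the identification of $(m, n) = (715, 806)$ in Section~\ref{sec:Comp}, and it is the one place where the particular choice of dimensions is essential.
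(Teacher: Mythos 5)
Your proposal is essentially the paper's proof: the interior facts are pulled from Lemma~\ref{lem:basic_lemma}; the second fundamental form and restricted Schouten tensor are simultaneously diagonalized as $h^{-1}\iota^*P = \tfrac{1}{2}B$ and $h^{-1}A = C$ in exactly the form of Lemma~\ref{comp_lem1} with $(m,n)=(715,806)$; and the leading-order asymptotics of $H_4$ and $S_3$ then drop out of the weighted sums in Section~\ref{sec:Backg} with the arithmetic relegated to the computer, just as the paper does.

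The one place your argument is thinner than the paper's is $4$-admissibility. Since $\sigma_4^g \equiv 0$, the metric $g$ sits on the \emph{boundary} of $\Gamma_4^+$, so admissibility is not an open condition, and the bare observation that the linearization is controlled by the elliptic operator $-\divsymb(T_3\nabla\cdot)$ does not by itself produce a conformal factor $\hat g = e^{2u}g \in \Gamma_4^+$ with $u|_{\partial X_\varepsilon}=0$: you still need to verify that some admissible direction of perturbation actually \emph{increases} $\sigma_4$ in the interior. The paper handles this by writing down the explicit ansatz $u(p,q) = (1+sr^2(p))/(1+s\varepsilon^2)$ (which equals $1$ on the boundary), computing $P^{g_u}$ and $\sigma_4^{g_u}$ to first order in $s$, and observing that the coefficient of $s$ is strictly positive. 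Your sketch needs either this concrete computation or an equivalent sign argument for the linearized Dirichlet problem to close the step; otherwise the proposal matches the paper's route.
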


\begin{proof}
The claims about the $\sigma_{4}$-curvatures and the Newton tensors follow from Lemma~\ref{lem:basic_lemma}. We prove the rest of Lemma~\ref{lem:lemma1} following the same strategy of Case, Moreira and Wang~\cite{Case}.

We write the metric $g$ on $X_\varepsilon$ as
\[ g = dr^2\oplus \sin^{2}rd\vartheta^2 \oplus g_H. \]

Fix $s \in \mathbb{R_+}$ and define $u: S_\varepsilon^{806} \times H \rightarrow \mathbb{R}$ by
\[ u(p,q)=\frac{1+sr^2(p)}{1+s\varepsilon^2}. \]

Set $g_u := u^{-2}g,$ 
\[ P^{g_u}=\frac{1+4s}{2}dr^2 \oplus \frac{1+4sr\cot r}{2}\sin^{2}rd\vartheta^2 \oplus \left( -\frac{1}{2} \right)g_H + \mathcal{O}(s^2)\]
for $s$ close to zero. Therefore
\[ g_{u}^{-8}\sigma_4^{g_u} = \sigma_4^g+\frac{1}{4}s\sum_{j=0}^{3} (-1)^{j} \binom{805}{3-j} \binom{715}{j} (1+805r\cot r)+\mathcal{O}(s^2). \]
It follows that $g_u \in \Gamma^+_4$ for $s$ sufficiently close to zero. Thus $g|_{T\partial X_\varepsilon}$ is 4-admissible.

By definition, 
\[ H_4 = \frac{2}{219,212,540,695}\sigma_{7,0}+\frac{2}{144,886,015}\sigma_{6,1}+ \frac{1}{114,837}\sigma_{5,2} + \frac{1}{379}\sigma_{4,3},\]
and
\[ S_3 =\frac{1}{434,658,045}T_{5,0} + \frac{2}{574,185}T_{4,1}+\frac{3}{1,516}T_{3,2}. \]
Combining these formulae with Lemma~\ref{comp_lem1} yields the claimed conclusions for $H_4$ and $S_3.$ 
\end{proof}

Here is the proof for Theorem~\ref{thm:thm1.2}.
\begin{proof}[Proof of Theorem~\ref{thm:thm1.2}]
Applying Lemma~\ref{lem:lemma1} to $({X_{\varepsilon}},g)$ implies that, up to scaling, $({X_{\varepsilon}},g)$ is a solution of (1.1) for all $\varepsilon \in (0, \pi /2)$. Lemma~\ref{lem:lemma1} further implies that there are constants $c_1, c_2>0$ such that $\iota_{2}^{*}S_{3} = {c_1}\varepsilon^{-5}g_H +\mathcal{O}(\varepsilon^{-3})$ and $H_{4}=c_{2}\varepsilon^{-7}+\mathcal{O}(\varepsilon^{-5})$ as $\varepsilon \rightarrow 0, \text{where } \iota: H \rightarrow \partial X_\varepsilon$ is inclusion map. Let $\pi: \partial X_{\varepsilon} \rightarrow H$ denote the projection map. As noted in \cite{Case}, for all $\phi \in C^{\infty}(H),$ the extension $v_{\phi} \text{ of } \pi^{*}\phi \text{ to } X_{\varepsilon}$ by (1.2) is of the form $v_{\phi}(p,q)=f(r(q))\phi(p).$ Therefore $T_{3}(\eta,\nabla v_{\phi})= \mathcal{O}(1) \text{ as } \varepsilon \rightarrow 0.$ Thus
\[ \mathcal{DF}^{g}(\pi^{*}\phi)=\pi^{*}[-\delta_{g_H}((\iota^{*}S_{3})(\overline{\nabla}\phi))-7(\iota^{*}H_{4})\phi]+\mathcal{O}(1). \]
for all $\phi \in C^{\infty}(H).$ It follows that the index of $\mathcal{DF}$ tends to $\infty$ as $\varepsilon \rightarrow 0.$ Corollary~\ref{cor5.7} then yields, up to scaling, the existence of the sequence $(\varepsilon_j)_j$ of bifurcation instants.
\end{proof}

\subsection{Products of a round sphere and a small geodesic ball in hyperbolic space}
In this subsection we apply Corollary~\ref{cor5.7} to products of a round sphere and a small geodesic ball in hyperbolic space with sectional curvature $1$ and $-1$, respectively. Note that this normalization ensures that the product is locally conformally flat. Our first task is to study the geometry of the boundary of these products. 
\begin{lem}
 \label{lem:lemma}
Denote $(S^{806},d\theta^2)$ and $(H^{715}, g_H)$ 
the round $ 806 $-sphere of constant sectional curvature 1 and the $ 715 $-dimensional  simply connected manifold of constant sectional curvature $-1$, respectively. Given $\varepsilon \in (0,\pi/2)$, set
\[ H_\varepsilon^{715} = \lbrace x \in H^{715} \ | \ r(x) \le \varepsilon \rbrace, \]
where $r$ is the geodesic distance from a fixed point $p \in H^{715}$. Let $(X_\varepsilon^{1521},g)$
denote the Riemannian product of $(S^{806},d\theta^2)$ and $(H^{715}, g_H)$, and let $\iota$ denote the inclusion of 
$S^{806}$ into $\partial X_\varepsilon$. Let $\kappa = \coth \varepsilon$ denote the mean curvature of $\partial H_\varepsilon^{715}$ in $H_\varepsilon^{715}$. 
Then $(X_\varepsilon,g)$ is such that
\begin{equation}
\sigma_1 = \frac{91}{2}, \ \sigma_2=\frac{3380}{4}, \ \sigma_3=\frac{56420}{8}, \ \sigma_4 \equiv 0, \ T_3 > 0
\end{equation}
Moreover $g|_{T\partial X_\varepsilon}$ is 4-admissible and the boundary $\partial X_\varepsilon$ is such that $H_4$ is a nonnegative constant, $S_3>0$, and
\[ H_4 = \frac{24,089,939,471,088}{144,886,015}\kappa^{7} + \mathcal{O}(\kappa^{5}) \] 
\[ \iota^{*}S_2 = \frac{508,268,486,964}{144,886,015}\kappa^{5} + \mathcal{O}(\kappa^{3}) \]
as $\varepsilon \rightarrow 0$
\end{lem}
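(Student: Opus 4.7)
The proof will parallel that of Lemma~\ref{lem:lemma1}, invoking Lemma~\ref{comp_lem2} in place of Lemma~\ref{comp_lem1} because now the boundary is produced by truncating the hyperbolic factor rather than the spherical one. The interior values of $\sigma_1,\dots,\sigma_4$ and of $T_3$ are unchanged from Lemma~\ref{lem:basic_lemma}, since $(X_\varepsilon,g)$ is an open submanifold of the full product $S^{806}\times H^{715}$.

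For 4-admissibility of $g|_{T\partial X_\varepsilon}$, I would write the product metric as
\[
g = d\theta^2 \oplus dr^2 \oplus \sinh^2 r\,d\vartheta^2,
\]
with $r$ the hyperbolic geodesic distance from the fixed center, and then set
\[
u(p,q) = \frac{1+sr^2(q)}{1+s\varepsilon^2},
\]
so that $u|_{\partial X_\varepsilon} \equiv 1$ and hence $g_u := u^{-2}g$ agrees with $g$ on $T\partial X_\varepsilon$. Expanding $P^{g_u}$ to first order in $s$ in close analogy with the spherical-cap argument from Lemma~\ref{lem:lemma1}, but now with $\sinh r$ and $\coth r$ replacing $\sin r$ and $\cot r$ in the perturbed block, gives a first-order-in-$s$ correction to $\sigma_4^{g_u}$ whose coefficient is an explicit alternating sum of binomial products times a nonnegative radial function. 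Provided this linear coefficient is nonvanishing, choosing the appropriate sign of $s$ puts $g_u \in \Gamma_4^+$ for $s$ near zero, witnessing 4-admissibility.

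On $\partial X_\varepsilon = S^{806} \times \partial H_\varepsilon^{715}$ the matrices entering the $H_4$ and $S_3$ formulas are
\[
B = h^{-1}\iota^{*}P = \tfrac{1}{2}A_{714,806}, \qquad C = h^{-1}A = 0\cdot I_{806} \oplus \kappa\cdot I_{714},
\]
with $\kappa = \coth\varepsilon$. Up to the scalar factor $(1/2)^{\ell}$ arising from the homogeneity of $B$, this is exactly the setup of Lemma~\ref{comp_lem2} with $m=715$, $n=806$. Because the total dimension $n+1 = 1521$ agrees with that of Lemma~\ref{lem:lemma1}, the expansions
\[
H_4 = \tfrac{2}{219{,}212{,}540{,}695}\sigma_{7,0} + \tfrac{2}{144{,}886{,}015}\sigma_{6,1} + \tfrac{1}{114{,}837}\sigma_{5,2} + \tfrac{1}{379}\sigma_{4,3}
\]
and
\[
S_3 = \tfrac{1}{434{,}658{,}045}T_{5,0} + \tfrac{2}{574{,}185}T_{4,1} + \tfrac{3}{1{,}516}T_{3,2}
\]
from Lemma~\ref{lem:lemma1} carry over verbatim. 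Substituting the formulas from Lemma~\ref{comp_lem2} then yields closed-form expressions; the leading order as $\varepsilon \to 0$ in $H_4$ is dictated by $\sigma_{7,0} = \binom{714}{7}\kappa^7$ and produces the claimed $\kappa^7$ coefficient, with an analogous computation giving the $\kappa^5$ leading term of $\iota^{*}S_3$. Constancy of $H_4$ is immediate from the $\SO(807)\times\SO(715)$-homogeneity of the boundary, a product of two round spheres.

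The chief technical point is the sign analysis in the 4-admissibility step: one must verify that the first-order-in-$s$ coefficient of $\sigma_4^{g_u}$ does not vanish and admits a sign compatible with $g_u$ entering $\Gamma_4^+$. Everything else reduces to bookkeeping with Lemma~\ref{comp_lem2} together with the $H_4$ and $S_3$ coefficient expansions already worked out for Lemma~\ref{lem:lemma1}.
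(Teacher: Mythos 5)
Your proposal mirrors the paper's proof almost exactly: interior quantities from Lemma~\ref{lem:basic_lemma}, 4-admissibility via the conformal factor $u(p,q)=\frac{1+sr^2(q)}{1+s\varepsilon^2}$ with $\sinh$/$\coth$ replacing $\sin$/$\cot$, and then the definitional formulas for $H_4$ and $S_3$ combined with Lemma~\ref{comp_lem2} to obtain the leading-order expansions in $\kappa=\coth\varepsilon$. The only additional content in your write-up is the explicit flag that the sign of the $\mathcal{O}(s)$ coefficient in $\sigma_4^{g_u}$ must be checked (the paper simply asserts $g_u\in\Gamma_4^+$ for small $s$) and the homogeneity argument for constancy of $H_4$, both of which are consistent with and complementary to the paper's argument.
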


\begin{proof}
The claims about the $\sigma_{4}$-curvatures and the Newton tensors follow from Lemma~\ref{lem:basic_lemma}. We prove the rest of Lemma~\ref{lem:lemma} following the same strategy of Case, Moreira and Wang~\cite{Case}.

We write the metric $g$ on $X_\varepsilon$ as
\[ g = d\theta^2\oplus dr^2\oplus \sinh^2rd\vartheta^2. \]

Fix $s \in \mathbb{R_+}$ and define $u: S^{806} \times H_\varepsilon^{715} \rightarrow \mathbb{R}$ by
\[ u(p,q)=\frac{1+sr^2(q)}{1+s\varepsilon^2}. \]

Set $g_u := u^{-2}g.$ 
\[ P^{g_u}=\frac{1}{2}d\theta^2\oplus\frac{4s-1}{2} dr^2\oplus\frac{4sr\text{coth}r-1}{2}\sinh^{2}rd\vartheta^2+\mathcal{O}(s^2) \]
for $s$ close to zero. Therefore
\[ \sigma_4^{g_u} = \sigma_4^g+\frac{1}{4}s\sum_{j=0}^{3} (-1)^{3-j} \binom{714}{3-j} \binom{806}{j} (1+714r\text{coth}r)+\mathcal{O}(s^2). \]
It follows that $g_u \in \Gamma^+_4$ for $s$ sufficiently close to zero. Thus $g|_{T\partial X_\varepsilon}$ is 4-admissible, as appropriate.

By definition, 
\[ H_4 = \frac{2}{219,212,540,695}\sigma_{7,0}+\frac{2}{144,886,015}\sigma_{6,1}+ \frac{1}{114,837}\sigma_{5,2} + \frac{1}{379}\sigma_{4,3},\]
and
\[ S_3 =\frac{1}{434,658,045}T_{5,0} + \frac{2}{574,185}T_{4,1}+\frac{3}{1,516}T_{3,2}. \]
Combining these formulae with Lemma~\ref{comp_lem2} yields the claimed conclusions for $H_4$ and $S_3.$ 
\end{proof}

Here is the proof for Theorem~\ref{thm:thm1.3}.
\begin{proof}[Proof of Theorem~\ref{thm:thm1.3}]
Applying Lemma~\ref{lem:lemma} to $({X_{\varepsilon}},g)$ implies that, up to scaling, $({X_{\varepsilon}},g)$ is a solution of (1.1) for all $\varepsilon \in (0, \pi /2)$. Lemma~\ref{lem:lemma} further implies that there are constants $c_1, c_2>0$ such that $ \iota^{*}S_{3} = {c_1}\varepsilon^{-5}g_H +\mathcal{O}(\varepsilon^{-3})$ and $H_{4}=c_{2}\varepsilon^{-7}+\mathcal{O}(\varepsilon^{-5})$ as $\varepsilon \rightarrow 0, \text{where } \iota: S^{806} \rightarrow \partial X_\varepsilon$ is inclusion map. Let $\pi: X_{\varepsilon} \rightarrow S^{806}$ denote the projection map. As noted in \cite{Case}, for all $\phi \in C^{\infty}(S^{806}),$ the extension $v_{\phi} \text{ of } \pi^{*}\phi \text{ to } X_{\varepsilon}$ by (1.2) is of the form $v_{\phi}(p,q)=f(r(q))\phi(p).$Therefore $T_{3}(\eta,\nabla v_{\phi})= \mathcal{O}(1) \text{ as } \varepsilon \rightarrow 0.$ Thus
\[ DF^{g}(\pi^{*}\phi)=\pi^{*}[-\delta_{d\theta^{2}}((\iota^{*}S_{3})(\overline{\nabla}\phi))-7(\iota^{*}H_{4})\phi]+\mathcal{O}(1). \]
for all $\phi \in C^{\infty}(S^{806}).$ It follows that the index of $\mathcal{DF}$ tends to $\infty$ as $\varepsilon \rightarrow 0.$ Corollary~\ref{cor5.7} then yields, up to scaling, the existence of the sequence $(\varepsilon_j)_j$ of bifurcation instants.
\end{proof}

\section*{Acknowledgement} The author would like to thank her advisor Dr. Jeffrey Case for many helpful discussions.

\bibliographystyle{abbrv}
\bibliography{bib}

\begin{thebibliography}{1}

\bibitem{10}
L.~Caffarelli, L.~Nirenberg, and J.~Spruck.
\newblock The {D}irichlet problem for nonlinear second-order elliptic
  equations. {III}. {F}unctions of the eigenvalues of the {H}essian.
\newblock {\em Acta Math.}, 155(3-4):261--301, 1985.

\bibitem{Case}
J.~S. Case, A.~C. Moreira, and Y.~Wang.
\newblock Nonuniqueness for a fully nonlinear boundary {Y}amabe-type problem
  via bifurcation theory.
\newblock {\em Calc. Var. Partial Differential Equations}, 58(3):Art. 106, 32,
  2019.

\bibitem{12}
S.-y.~S. Chen.
\newblock Conformal deformation on manifolds with boundary.
\newblock {\em Geom. Funct. Anal.}, 19(4):1029--1064, 2009.

\bibitem{20}
D.~Gilbarg and N.~S. Trudinger.
\newblock {\em Elliptic partial differential equations of second order}, volume
  224 of {\em Grundlehren der Mathematischen Wissenschaften [Fundamental
  Principles of Mathematical Sciences]}.
\newblock Springer-Verlag, Berlin, second edition, 1983.

\bibitem{26}
M.~Gursky and J.~Streets.
\newblock A formal {R}iemannian structure on conformal classes and uniqueness
  for the {$\sigma_2$}-{Y}amabe problem.
\newblock {\em Geom. Topol.}, 22(6):3501--3573, 2018.

\bibitem{24}
M.~Gursky and J.~Streets.
\newblock Variational structure of the {$v_{\frac n2}$}-{Y}amabe problem.
\newblock {\em Differential Geom. Appl.}, 56:187--201, 2018.

\bibitem{25}
M.~J. Gursky and J.~Streets.
\newblock A formal {R}iemannian structure on conformal classes and the inverse
  {G}auss curvature flow.
\newblock arXiv:1507.04781, preprint.

\end{thebibliography}
\end{document}